\documentclass[12pt]{amsart}
\usepackage{amssymb,amsmath,amsthm, amscd, enumerate, mathrsfs}
\usepackage{graphicx, hhline}
\usepackage[all]{xy}
\usepackage{braket}
\usepackage[usenames]{color}
\usepackage{hyperref}
\usepackage{fancyhdr}
\usepackage[top=30truemm,bottom=30truemm,left=30truemm,right=30truemm]{geometry}
\usepackage[pagewise]{lineno}

\hypersetup{colorlinks=true}

\title[Generalized canonical bundle formula]{Addendum: On generalized canonical bundle formula and boundedness of complements in complex analytic setting}
\author{Kenta Hashizume}
\date{2026/08/31}
\keywords{generalized pair, canonical bundle formula}
\subjclass[2020]{Primary: 14D06, Secondary: 14J40, 14C20, 32C15}
\address{Department of 
Mathematics, Faculty of Science, Niigata University, Niigata 950-2181, Japan}
\email{hkenta@math.sc.niigata-u.ac.jp}

\newtheorem{thm}{Theorem}[section]

\newtheorem{prop}[thm]{Proposition}

\theoremstyle{definition}

\newtheorem{rem}[thm]{Remark}

\newtheorem*{ack}{Acknowledgments} 
 
\newtheorem*{b-divisor}{b-divisors}

\newtheorem*{quasi-log}{Quasi-log complex analytic space} 
 
\newtheorem*{g-pair}{Generalized pairs} 
\newtheorem*{adj-g-pair}{Divisorial adjunction for generalized pairs} 
\newtheorem*{mmp-g-pair}{MMP for generalized pairs}

\newtheorem*{claim*}{Claim}

\begin{document}

\begin{abstract}
We establish the generalized canonical bundle formula for generalized lc-trivial fibrations without the assumption on the nef part in the complex analytic setting. 
\end{abstract}

\maketitle

\tableofcontents

\section{Introduction}\label{sec--intro}

In \cite{has-gen-can-bundle-formula}, the author proved the following (see \cite[Theorem 1.1]{has-gen-can-bundle-formula}). 

\begin{thm}\label{thm--gen-can-bundle-formula-intro}
Let $\pi \colon (X,B+M) \to S$ be a generalized sub-pair with the nef part $\boldsymbol{\rm M}$, and let $f\colon (X,B+M) \to Z$ be a generalized lc-trivial fibration over $S$. 
Let $W \subset S$ be a Stein compact subset. 
Suppose that
\begin{itemize}
\item
$\boldsymbol{\rm M}$ is a finite $\mathbb{R}_{>0}$-linear combination of b-nef$/S$ $\mathbb{Q}$-b-Cartier $\mathbb{Q}$-b-divisors on $X$, and
\item
there exists a Zariski open dense subset $U \subset Z$ such that $B|_{f^{-1}(U)}$ is effective.
\end{itemize}
Then, after replacing $S$ with a suitable open neighborhood of $W$, we have the following properties.
\begin{enumerate}[(i)]
\item \label{thm--gen-can-bundle-formula-intro-(i)}
Let $\boldsymbol{\rm G}$ (resp.~$\boldsymbol{\rm N}$) be the discriminant $\mathbb{R}$-b-divisor (resp.~the moduli $\mathbb{R}$-b-divisor) associated to $f$. 
Then $\boldsymbol{\rm N}$ is a finite $\mathbb{R}_{>0}$-linear combination of b-nef$/S$ $\mathbb{Q}$-b-Cartier $\mathbb{Q}$-b-divisors on $Z$. 
In particular, $(Z, \boldsymbol{\rm G}_{Z} +\boldsymbol{\rm N}_{Z}) \to S$ is a generalized sub-pair with the nef part $\boldsymbol{\rm N}$. 
Furthermore, if $(X,B+M) \to S$ is generalized lc (resp.~generalized klt), then $(Z, \boldsymbol{\rm G}_{Z} +\boldsymbol{\rm N}_{Z}) \to S$ is also generalized lc (resp.~generalized klt). 
\item \label{thm--gen-can-bundle-formula-intro-(ii)}
For any open subset $\tilde{S} \subset S$ that does not necessarily contain $W$, if we put $\tilde{X} \subset X$ and $\tilde{Z} \subset Z$ as the inverse images of $\tilde{S}$, then the moduli $\mathbb{R}$-b-divisor $\tilde{\boldsymbol{\rm N}}$ associated to the generalized lc-trivial fibration $(\tilde{X}, B|_{\tilde{X}}+M|_{\tilde{X}}) \to \tilde{Z}$ over $\tilde{S}$ satisfies (\ref{thm--gen-can-bundle-formula-intro-(i)}). 
\end{enumerate}
In particular, we have $\tilde{\boldsymbol{\rm N}}=\boldsymbol{\rm N}|_{\tilde{Z}}$. 
\end{thm}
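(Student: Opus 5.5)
This theorem is quoted from \cite[Theorem 1.1]{has-gen-can-bundle-formula}. The plan below reconstructs its proof by reduction to the usual lc-trivial canonical bundle formula in the complex analytic setting, following the strategy of Filipazzi for algebraic generalized pairs.

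\emph{Step 1: localize and take a log resolution.} Since $W$ is Stein compact, we may shrink $S$ around $W$. After shrinking, $X$ and $Z$ have only finitely many relevant components over a neighborhood of $W$, and the analytic MMP/resolution machinery applies. We choose a projective bimeromorphism $X' \to X$ on which $\boldsymbol{\rm M}$ descends. Then $M_{X'}=\sum_i r_i M_i$ with $r_i>0$ and each $M_i$ a nef$/S$ $\mathbb{Q}$-Cartier $\mathbb{Q}$-divisor, and $(X',B')$ is log smooth.

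\emph{Step 2: replace each $M_i$ by a boundary.} The idea is to use the nef parts one at a time. After shrinking $S$, each $M_i$ is nef$/S$. Adding a small multiple of a relatively ample divisor makes it ample over $Z$ and over $S$ near $W$. By Bertini on a relatively compact neighborhood of $W$, we then choose general effective $\mathbb{Q}$-divisors $A_{i,k}\sim_{\mathbb{Q}} M_i+\frac{1}{k}H$, with $H$ ample$/S$. These are chosen so that $(X', B'+\sum_i r_iA_{i,k})$ is log smooth and the $A_{i,k}$ contain no lc centers and dominate $Z$ without vertical components. This turns the generalized lc-trivial fibration into an honest sub-lc-trivial fibration up to the error term $\frac{1}{k}\sum_i r_i H$.

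The cleaner route avoids $H$: first replace $Z$ by a resolution and $X'$ by a suitable model, then apply Ambro/Fujino--Gongyo type results in the analytic setting, namely the known canonical bundle formula for lc-trivial fibrations over Stein compacts. For each $i$ we consider the pair $(X',B'+ r_iA_i)$ with $A_i$ general in $|M_i|_{\mathbb{Q}}$. The discriminant is unchanged, because a general $A_i$ contributes nothing to lc thresholds over codimension one points of $Z$. Writing $M_i \sim_{\mathbb{Q}} f^*L_i+\cdots$ isolates the piece of the moduli part coming from $M_i$. The moduli b-divisor then splits as $\boldsymbol{\rm N}=\boldsymbol{\rm N}^0+\sum_i r_i \boldsymbol{\rm N}_i$. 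Here $\boldsymbol{\rm N}^0$ is the moduli part of the lc-trivial fibration associated to $B$, which is b-nef and b-$\mathbb{Q}$-Cartier after base change to a suitable model by the analytic Ambro theorem. Each $\boldsymbol{\rm N}_i$ is the b-divisor of the pushforward of the nef $M_i$, and should be b-nef because pushforwards of nef divisors on the generic fibre direction remain nef in the limit. The $\mathbb{R}$-coefficients of $B$ would be handled by writing $B$ as a convex combination of $\mathbb{Q}$-divisors with the same support, which keeps the linear-combination form.

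\emph{Step 3: singularities and the local statement.} The generalized lc/klt statement follows because the discriminant coefficients are computed by lc thresholds of the generalized pair over codimension-one points. Hence ${\rm mult}<1$ (resp.\ $\le 1$) on every model. Statement (ii) follows because every construction is local over $S$ and compatible with restriction to open $\tilde S$: the discriminant and moduli are defined fibrewise over codimension-one points of $Z$. This gives $\tilde{\boldsymbol{\rm N}}=\boldsymbol{\rm N}|_{\tilde Z}$, with b-nefness and b-Cartierness inherited.

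The main obstacle is Step 2, proving b-nefness of the contribution of each $M_i$. The nef part is only nef and not semiample, so general members need not exist. I would first try the perturbation $M_i+\frac1k H$ and pass to the limit $k\to\infty$, using that b-nefness is a closed condition. The delicate point is ensuring that the models on which the perturbed moduli parts descend stabilize independently of $k$; I would try to obtain this from a single model on which every moduli part descends.
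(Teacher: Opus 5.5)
There is a genuine gap, and it sits in Step 2, which carries the whole content of the theorem. With $A_{i,k}\sim_{\mathbb Q}M_i+\frac1kH$ you get $K_{X'}+B'+\sum_i r_iA_{i,k}\sim_{\mathbb R}f^{*}L+\frac1k\bigl(\sum_i r_i\bigr)H$. Since $H$ is ample over $Z$, this is not $\mathbb R$-linearly trivial over $Z$. So the perturbed objects are not lc-trivial fibrations, they have no moduli b-divisors, and there is nothing to pass to the limit. That, and not a stabilization issue, is why no ``single model'' is available.

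You cannot repair this by subtracting an effective divisor $\mathbb R$-linearly equivalent over $Z$ to $\frac1k(\sum_i r_i)H$. Such a divisor is horizontal, because $H$ is ample on the generic fibre. The boundary then becomes negative on the generic fibre, and you lose the hypothesis that it is effective over a dense open subset of $Z$, which b-nefness of moduli parts needs.

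The other parts of Step 2 also fail:
\begin{itemize}
\item The ``cleaner route'' needs $|M_i|_{\mathbb Q}\neq\emptyset$, which fails for nef non-semiample $M_i$, as you note yourself.
\item The splitting $\boldsymbol{\rm N}=\boldsymbol{\rm N}^0+\sum_i r_i\boldsymbol{\rm N}_i$ is undefined. $K_{X'}+B'\sim_{\mathbb R,Z}-M_{X'}$ is not trivial over $Z$ in general, so $(X',B')\to Z$ is not an lc-trivial fibration. No individual $M_i$ is $\mathbb Q$-linearly trivial over $Z$ either.
\item ``Pushforwards of nef divisors remain nef'' is not a theorem.
\end{itemize}
Consequently you have not established b-nefness, b-$\mathbb Q$-Cartierness, or the $\mathbb R_{>0}$-combination structure of $\boldsymbol{\rm N}$. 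Step 3 rests on all three.

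A working argument may only use perturbations that keep $K+B+M\sim_{\mathbb R,Z}0$ and keep the horizontal part of the boundary effective. Two such moves are:
\begin{itemize}
\item adding $\epsilon f^{*}A_Z$ with $A_Z$ ample on the base, which only shifts $\boldsymbol{\rm N}$ by the b-divisor of $\epsilon A_Z$;
\item absorbing effective divisors, for instance from Kodaira's lemma, into the boundary, which uses up some room in the singularities.
\end{itemize}
These have to be combined with structural reductions (semistable reduction, dlt blow-ups, MMP) that make such perturbations available.

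This matches how the cited proof is organized, as the Remark after the proof of Theorem \ref{thm--main-analytic-intro} explains. Variations of mixed Hodge structures are not used. So one first proves the formula for generalized klt-trivial fibrations, and only then extends it to generalized lc-trivial fibrations. In particular, the analytic lc-trivial canonical bundle formula you invoke as a known black box is not an input in this paper's logic. It is the output $\boldsymbol{\rm M}=0$ of the very statement you are proving, and that is exactly how it enters the proof of Theorem \ref{thm--main-analytic-intro}.
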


In this note, we remove the assumption on the nef part $\boldsymbol{\rm M}$ in Theorem \ref{thm--gen-can-bundle-formula-intro}.

\begin{thm}[Main result, complex analytic setting]\label{thm--main-analytic-intro}
Let $\pi \colon (X,B+M) \to S$ be a generalized sub-pair with the nef part $\boldsymbol{\rm M}$, and let $f\colon (X,B+M) \to Z$ be a generalized lc-trivial fibration over $S$. 
Let $W \subset S$ be a Stein compact subset. 
Suppose that there exists a Zariski open dense subset $U \subset Z$ such that $B|_{f^{-1}(U)}$ is effective.
Then, after replacing $S$ with a suitable open neighborhood of $W$, we have the following properties.
\begin{enumerate}[(i)]
\item \label{thm--main-analytic-intro-(i)}
Let $\boldsymbol{\rm G}$ (resp.~$\boldsymbol{\rm N}$) be the discriminant $\mathbb{R}$-b-divisor (resp.~the moduli $\mathbb{R}$-b-divisor) associated to $f$. 
Then $\boldsymbol{\rm N}$ is a b-nef$/S$ $\mathbb{R}$-b-Cartier $\mathbb{R}$-b-divisors on $Z$. 
In particular, $(Z, \boldsymbol{\rm G}_{Z} +\boldsymbol{\rm N}_{Z}) \to S$ is a generalized sub-pair with the nef part $\boldsymbol{\rm N}$. 
Furthermore, if $(X,B+M) \to S$ is generalized lc (resp.~generalized klt), then $(Z, \boldsymbol{\rm G}_{Z} +\boldsymbol{\rm N}_{Z}) \to S$ is also generalized lc (resp.~generalized klt). 
\item \label{thm--main-analytic-intro-(ii)}
For any open subset $\tilde{S} \subset S$ that does not necessarily contain $W$, if we put $\tilde{X} \subset X$ and $\tilde{Z} \subset Z$ as the inverse images of $\tilde{S}$, then the moduli $\mathbb{R}$-b-divisor $\tilde{\boldsymbol{\rm N}}$ associated to the generalized lc-trivial fibration $(\tilde{X}, B|_{\tilde{X}}+M|_{\tilde{X}}) \to \tilde{Z}$ over $\tilde{S}$ satisfies (\ref{thm--main-analytic-intro-(i)}). 
\end{enumerate}
In particular, we have $\tilde{\boldsymbol{\rm N}}=\boldsymbol{\rm N}|_{\tilde{Z}}$. 
Moreover, if $\boldsymbol{\rm M}$ is a finite $\mathbb{R}_{>0}$-linear combination of b-nef$/S$ $\mathbb{Q}$-b-Cartier $\mathbb{Q}$-b-divisors, then so is $\boldsymbol{\rm N}$. 
\end{thm}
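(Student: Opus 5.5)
The strategy is to reduce Theorem \ref{thm--main-analytic-intro} to Theorem \ref{thm--gen-can-bundle-formula-intro} by perturbing the nef part $\boldsymbol{\rm M}$. We approximate it by finite $\mathbb{R}_{>0}$-linear combinations of b-nef$/S$ $\mathbb{Q}$-b-Cartier $\mathbb{Q}$-b-divisors, and absorb the error into an ample divisor pulled back from $Z$.

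First, shrink $S$ around $W$ and fix a log resolution $X' \to X$ on which $\boldsymbol{\rm M}$ descends, so $\boldsymbol{\rm M}=\overline{M'}$ with $M'$ nef$/S$. Since $f$ is a generalized lc-trivial fibration, $K_X+B+M\sim_{\mathbb{R},Z}0$. After shrinking $S$, choose a divisor $H$ on $Z$ that is ample$/S$ over a neighborhood of $W$. The key claim is that for every $\epsilon>0$ the divisor $M'+\epsilon f'^{*}H$ is a finite $\mathbb{R}_{>0}$-combination of nef$/S$ $\mathbb{Q}$-Cartier divisors, modulo an $\mathbb{R}$-linear equivalence that is a combination of principal divisors compatible with the fibration.

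To prove the claim, write $M'=\sum r_iD_i$ with $D_i$ Cartier. The nef condition is tested on curves over a relatively compact neighborhood of $W$. By compactness and the analytic cone theorem one expects finitely many relevant extremal conditions, although in the analytic setting this is only true after shrinking. Perturbing the coefficients $r_i$ to rational $r_i'$ near $r_i$ then gives $\sum r_i'D_i+\epsilon f'^*H$ nef$/S$, and $M'+\epsilon f'^*H$ is a convex combination of such divisors. Compactness of $W$ should make the needed finiteness available.

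We then apply Theorem \ref{thm--gen-can-bundle-formula-intro} to the generalized pair $(X,B+M+\epsilon f^*H)$ with nef part $\boldsymbol{\rm M}+\epsilon \overline{f'^*H}$, which still gives a generalized lc-trivial fibration. The discriminant is unchanged, since the lc thresholds only see $B$ and the vertical data $\boldsymbol{\rm M}-\boldsymbol{\rm M}_{X'}$, which are unchanged. The moduli part becomes $\boldsymbol{\rm N}+\epsilon\overline{H}$. Hence $\boldsymbol{\rm N}+\epsilon\overline{H}$ is b-nef$/S$ and b-$\mathbb{R}$-Cartier; more precisely, it descends to a fixed model, namely the model for the $\mathbb{Q}$-pieces, which we must check is independent of $\epsilon$. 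Letting $\epsilon\to0$ yields that $\boldsymbol{\rm N}$ is b-nef$/S$.

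For b-Cartierness with a uniform model, I would instead write $M'$ itself as $\sum r_iD_i$ and use linearity of the moduli part in the coefficients. Concretely, consider the rational polytope $P$ of coefficient vectors for which $K+B+\sum r_i'D_i\sim_{\mathbb{Q},Z}0$ holds with $\sum r_i'D_i+\epsilon f'^*H$ nef. On $P$ the moduli part is affine in $r'$, and each rational vertex descends to a common model produced by Theorem \ref{thm--gen-can-bundle-formula-intro}. This also gives the final sentence of the theorem, since the same argument applies when $\boldsymbol{\rm M}$ is already of the stated form. Generalized lc/klt preservation follows as in Theorem \ref{thm--gen-can-bundle-formula-intro}, because $\boldsymbol{\rm G}$ is unchanged. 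Part (ii) follows by restricting the construction to $\tilde{S}$ and using (ii) of Theorem \ref{thm--gen-can-bundle-formula-intro}, since the limit process commutes with restriction.

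The main obstacle is the rational approximation in the analytic setting. It requires keeping both the relative triviality $\sim_{\mathbb{R},Z}$ and nefness over a neighborhood of $W$ under a rational perturbation, and it requires the fibration-compatible linear relations to define a rational affine subspace. I would handle this by working on the finite-dimensional space spanned by the components and the relations, using that they are defined over $\mathbb{Q}$.
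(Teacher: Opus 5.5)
Your argument has a genuine gap at its key step: the claim that $M'+\epsilon f'^{*}H$ is a finite $\mathbb{R}_{>0}$-linear combination of nef$/S$ $\mathbb{Q}$-Cartier divisors. This is not just delicate in the analytic case; there is no reason for it to hold at all.

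\textbf{Why the decomposition claim fails.}
\begin{enumerate}
\item The term $\epsilon f'^{*}H$ has degree $0$ on every curve contracted by $f'$, so it gives no room on $f'$-vertical curves. On those curves you still need $M'$ itself to be a positive combination of rational classes in the relative nef cone of $X'$ over $Z$.
\item Nef cones need not be locally rational polyhedral. Nothing in the setting prevents the class of $M'$ from spanning an irrational extremal ray of that cone. Here $M'\equiv_Z -(K_{X'}+B')$, and $B'$ has real, possibly negative coefficients. If that happens, no decomposition exists: each rational nef piece would have to lie on that ray and hence be $\equiv_Z 0$.
\item Your perturbation $r_i\mapsto r'_i$ already fails on any vertical curve $C$ with $M'\cdot C=0$, and such curves are typical. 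On such a curve $(\sum r'_iD_i+\epsilon f'^{*}H)\cdot C=\sum(r'_i-r_i)D_i\cdot C$, which can be negative.
\item The cone theorem does not rescue this. It controls $(K+B)$-negative extremal rays of an lc pair, not the face on which an arbitrary nef divisor vanishes, and $(X',B')$ is only sub-lc. Compactness of $W$ yields no finiteness of ``extremal conditions''.
\end{enumerate}

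\textbf{Why the polytope variant fails.}
\begin{enumerate}
\item For rational $r'$, the condition $K+B+\sum r'_iD_i\sim_{\mathbb{Q},Z}0$ is impossible once $B$ has an irrational coefficient, because the left side is then not a $\mathbb{Q}$-divisor.
\item If you use $\sim_{\mathbb{R},Z}$ instead, vertical nefness does become automatic, since the error is $\mathbb{R}$-linearly equivalent to a small pullback absorbed by $\epsilon H$. But the admissible $r'$ then form $r+V$ with $V$ a rational subspace, and this affine space in general contains no rational point.
\end{enumerate}

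Given the claim, your discriminant computation and the limit $\epsilon\to 0$ would be fine, since $\overline{H}$ descends to $Z$. But if the decomposition were always available, Theorem~\ref{thm--gen-can-bundle-formula-intro} would already give the result by this formal argument. Removing that rationality input is exactly what this statement is about.

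\textbf{What the paper does instead.} The paper never makes $\boldsymbol{\rm M}$ rational. It follows Filipazzi's proof of the generalized canonical bundle formula, which reduces a generalized lc-trivial fibration to an ordinary lc-trivial fibration. The tools are:
\begin{enumerate}
\item weak semistable reduction (Karu; Eguchi--Hashizume in the analytic case);
\item generalized dlt blow-ups;
\item the MMP for $\mathbb{Q}$-factorial generalized dlt pairs (Birkar--Zhang; Fujino in the analytic case).
\end{enumerate}
None of these needs the nef part to be a combination of $\mathbb{Q}$-b-divisors.

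All rationality enters through the case $\boldsymbol{\rm M}=0$: Proposition~\ref{prop--M=0}, or Theorem~\ref{thm--gen-can-bundle-formula-intro} with $\boldsymbol{\rm M}=0$. There the real coefficients sit in the boundary, and the moduli part is automatically a finite $\mathbb{R}_{>0}$-combination of b-nef $\mathbb{Q}$-b-Cartier $\mathbb{Q}$-b-divisors.

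In the analytic case, the paper first enlarges $W$ to a Stein compact $W'$ satisfying property (P) (Fujino's Lemma 2.16), so that the analytic MMP is available. It then deduces the statement for $W$ from part (ii) for $W'$.
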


The following theorem is the corresponding algebraic setting. 

\begin{thm}[Main result, algebraic setting]\label{thm--main-algebraic-intro}
Let $\pi \colon (X,B+M) \to S$ be a generalized sub-pair over a quasi-projective scheme $S$ with the nef part $\boldsymbol{\rm M}$. 
Let $f\colon (X,B+M) \to Z$ be a generalized lc-trivial fibration over $S$. 
Suppose that $B|_{f^{-1}(U)}$ is effective for some Zariski open subset $U \subset Z$.
Let $\boldsymbol{\rm G}$ (resp.~$\boldsymbol{\rm N}$) be the discriminant $\mathbb{R}$-b-divisor (resp.~the moduli $\mathbb{R}$-b-divisor) associated to $f$. 
Then $\boldsymbol{\rm N}$ is a b-nef$/S$ $\mathbb{R}$-b-Cartier $\mathbb{R}$-b-divisors on $Z$, and $(Z, \boldsymbol{\rm G}_{Z} +\boldsymbol{\rm N}_{Z}) \to S$ is a generalized sub-pair with the nef part $\boldsymbol{\rm N}$. 
Furthermore, if $\boldsymbol{\rm M}$ is a finite $\mathbb{R}_{>0}$-linear combination of b-nef$/S$ $\mathbb{Q}$-b-Cartier $\mathbb{Q}$-b-divisors, then so is $\boldsymbol{\rm N}$. 
If $(X,B+M) \to S$ is generalized lc (resp.~generalized klt), then $(Z, \boldsymbol{\rm G}_{Z} +\boldsymbol{\rm N}_{Z}) \to S$ is also generalized lc (resp.~generalized klt). 
\end{thm}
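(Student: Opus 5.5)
The plan is to deduce Theorem \ref{thm--main-algebraic-intro} from the analytic statement, Theorem \ref{thm--main-analytic-intro}, by analytification. The only genuinely new input is needed in the analytic theorem, namely removing the rationality hypothesis on $\boldsymbol{\rm M}$. Since the algebraic statement is presumably proved in the same way, I sketch the core argument for the nef part in either setting.

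\textbf{Reduction to a rational decomposition.} Write $\boldsymbol{\rm M}=\sum_{i=1}^{k} r_i \boldsymbol{\rm M}_i$, where the $r_i\in\mathbb{R}$ are linearly independent over $\mathbb{Q}$ and the $\boldsymbol{\rm M}_i$ are $\mathbb{Q}$-b-Cartier $\mathbb{Q}$-b-divisors. All of them descend to a fixed log resolution $X'\to X$ on which $\boldsymbol{\rm M}$ descends. Similarly decompose $B$ and the $\mathbb{R}$-Cartier divisor $r$ with $K_X+B+M\sim_{\mathbb{R}} f^*r$. The conditions ``$\boldsymbol{\rm M}_{X'}$ is nef$/S$'' and ``$K_X+B+M\sim_{\mathbb{R},Z}0$'' are finitely many linear conditions with rational coefficients on the coefficient vector. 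Nefness is not linear in general, so I would instead use the following standard trick: on the relatively compact neighbourhood of $W$, the nef cone of $X'/S$ restricted to the finite-dimensional rational subspace $V$ spanned by the $\boldsymbol{\rm M}_{i,X'}$ is a rational polyhedral-enough region locally near $\boldsymbol{\rm M}_{X'}$. This is the analytic analogue of Shokurov's polytope argument, and here I would allow shrinking $S$ around $W$. If that is unavailable, I would use the weaker fact that the set of $v\in V$ with $v$ nef$/S$ is a closed convex cone containing $\boldsymbol{\rm M}_{X'}$. In that case I would perturb within the rational affine subspace cut out by the linear (lc-trivial) conditions, writing $\boldsymbol{\rm M}=\sum_j a_j \boldsymbol{\rm M}^{(j)}$ with $a_j>0$, $\sum a_j=1$, each $\boldsymbol{\rm M}^{(j)}$ rational, $\boldsymbol{\rm M}^{(j)}-\boldsymbol{\rm M}$ arbitrarily small, and with compatible rational $B^{(j)}$ such that each $f\colon (X,B^{(j)}+M^{(j)})\to Z$ is a generalized lc-trivial fibration. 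In this construction $B^{(j)}$ is effective over $U$, since $B$ and $B^{(j)}$ have the same support there.

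\textbf{The main obstacle.} The hard point is that $\boldsymbol{\rm M}^{(j)}$ need not be nef. I would handle it as follows. The moduli part is linear in the data: $\boldsymbol{\rm N}=\sum_j a_j\boldsymbol{\rm N}^{(j)}$, and similarly for the discriminant, provided the discriminant coefficients (lc thresholds) vary linearly near the original data. This linearity can be arranged by fixing a resolution $Z'\to Z$ and $X'$ such that everything is log smooth over $Z'$ away from a snc divisor, and perturbing only in directions where the thresholds stay computed by the same divisors. Nefness of $\boldsymbol{\rm N}$, however, should be proved directly by approximation rather than via the $\boldsymbol{\rm N}^{(j)}$. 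Add a small multiple $\epsilon A$ of a relatively ample $\mathbb{Q}$-divisor pulled back from $Z'$ (after shrinking $S$ around $W$), so that each $\boldsymbol{\rm M}^{(j)}+\epsilon f'^*A$ is nef on curves over $S$. This step is delicate in the analytic case; a cleaner route is to pick $\boldsymbol{\rm M}^{(j)}$ as elements of the nef cone near $\boldsymbol{\rm M}_{X'}$, using that $\boldsymbol{\rm M}_{X'}+\epsilon H$ lies in the interior of the nef cone for $H$ relatively ample on $X'$, and then to absorb $\epsilon H$ by moving it into $B$ via $\epsilon H\sim_{\mathbb{R}}\epsilon H'$ with $H'\ge0$ general. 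This gives a generalized lc-trivial fibration with nef rational parts, $\boldsymbol{\rm M}$ replaced by $\boldsymbol{\rm M}+\epsilon\overline{H}$ and $B$ replaced by $B-\epsilon H'$. Effectivity over $U$ of the modified boundary fails for $B-\epsilon H'$, so I would instead put the ample perturbation on the $B$-side with a positive sign. The idea is to decompose $\boldsymbol{\rm M}_{X'}+\epsilon H=\sum a_j\boldsymbol{\rm M}^{(j)}_{X'}$ with each summand nef and rational, and to take $B_\epsilon=B-\epsilon H+\epsilon H$ correspondingly. Then Theorem \ref{thm--gen-can-bundle-formula-intro} gives that $\boldsymbol{\rm N}_\epsilon$ is b-nef$/S$, and since $\boldsymbol{\rm N}_\epsilon\to\boldsymbol{\rm N}$ as $\epsilon\to0$ on a fixed model $Z'$ where all descend (by the linearity above), $\boldsymbol{\rm N}$ is b-nef$/S$ and descends to $Z'$, hence is b-Cartier.

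\textbf{Remaining assertions.} Once b-nefness and descent are established, the generalized lc/klt statements follow exactly as in Theorem \ref{thm--gen-can-bundle-formula-intro}, since they concern only $\boldsymbol{\rm G}$. Compatibility with restriction to $\tilde S$ follows because the construction commutes with restriction. The final assertion, that $\boldsymbol{\rm N}$ is a positive combination of rational b-nef divisors whenever $\boldsymbol{\rm M}$ is, is Theorem \ref{thm--gen-can-bundle-formula-intro} itself. For the algebraic case, I would cover $S$ by finitely many Stein compacta after analytification, or run the same argument with GAGA.
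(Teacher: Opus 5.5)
Your core perturbation step fails, and it fails exactly where the difficulty of the theorem lies. To invoke Theorem~\ref{thm--gen-can-bundle-formula-intro}, you must replace $\boldsymbol{\rm M}$ by a positive combination of \emph{nef} rational b-divisors. You must do this while keeping $K_X+B+M\sim_{\mathbb{R}}f^{*}L$ and keeping $B$ effective over $U$.

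In general $\boldsymbol{\rm M}_{X'}$ is not such a combination inside the rational span $V$. Nef cones can be round, and $\boldsymbol{\rm M}_{X'}$ may lie on an irrational boundary ray of $\mathrm{Nef}(X'/S)\cap V$. This is precisely why Theorem~\ref{thm--gen-can-bundle-formula-intro} carries its hypothesis on $\boldsymbol{\rm M}$. Neither of your two fixes gets around this:
\begin{itemize}
\item Adding $\epsilon f'^{*}A$ preserves lc-triviality. However, $f'^{*}A$ is zero on $f'$-contracted curves, so it cannot restore nefness of the rational approximations $\boldsymbol{\rm M}^{(j)}_{X'}$ along the fibres of $f'$.
\item Adding $\epsilon H$ with $H$ ample on $X'$ does land in the interior of the nef cone. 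But $H$ is $f'$-ample, so the pullback of $K_X+B+M$ plus $\epsilon H$ is no longer $\mathbb{R}$-linearly trivial over $Z$. Any compensating divisor is anti-ample on the generic fibre. So $B$ acquires a negative horizontal part in general (e.g.\ when $B$ has no horizontal component), and the effectivity hypothesis of Theorem~\ref{thm--gen-can-bundle-formula-intro} fails.
\end{itemize}
Your proposed remedy $B_\epsilon=B-\epsilon H+\epsilon H$ is just $B$. So the perturbation is undone, and no decomposition into nef rational parts compatible with lc-triviality has been produced. The limit $\boldsymbol{\rm N}_\epsilon\to\boldsymbol{\rm N}$ therefore has nothing to start from. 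In addition, a common model on which all $\boldsymbol{\rm N}_\epsilon$ descend is asserted, not proved.

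The missing idea is that one should not approximate $\boldsymbol{\rm M}$ at all. The paper takes as input the case $\boldsymbol{\rm M}=0$ with real coefficients, Proposition~\ref{prop--M=0}. There the moduli part of an lc-trivial fibration with $\mathbb{R}$-boundary is already a positive combination of b-nef $\mathbb{Q}$-b-Cartier $\mathbb{Q}$-b-divisors. The paper then runs Filipazzi's argument for generalized pairs. Its ingredients are weak semistable reduction, generalized dlt blow-ups, and the MMP for $\mathbb{Q}$-factorial generalized dlt pairs. None of these uses rationality of $\boldsymbol{\rm M}$, so no rational approximation of $\boldsymbol{\rm M}$ is ever needed.

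Note also that the paper's logic runs opposite to yours. Theorem~\ref{thm--main-analytic-intro} is proved by imitating the algebraic argument, after arranging property (P). So it cannot serve as a black box from which to get Theorem~\ref{thm--main-algebraic-intro}. Your analytification step would in any case need more than you state. A non-compact $S$ is not a finite union of Stein compacta. You would also still have to produce a single algebraic model of $Z$ on which $\boldsymbol{\rm N}$ descends.
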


We note that Theorem \ref{thm--main-algebraic-intro} was proved in \cite[Theorem 11.4.4]{chlx} by using the theory of foliations.

\begin{ack}
The author was partially supported by JSPS KAKENHI Grant Number JP26K06724. 
The work was done when the author attend ``Mini-Workshop on  Complex Geometry related to MMP''. 
The author thanks Professors Atsushi Ito, Takayuki Koike, and Shin-ichi Matsumura, who organized the workshop and invited the author to the workshop.  
\end{ack}

\section{Proofs of main results}\label{sec--proof}

For the reader's convenience, we first treat the algebraic setting (Theorem \ref{thm--main-algebraic-intro}), and then we treat the complex analytic setting (Theorem \ref{thm--main-analytic-intro}). 
We refer the reader to \cite[Definition 2.19]{filipazzi-svaldi} and \cite[Definition 3.1]{has-gen-can-bundle-formula} for the definition of the generalized canonical bundle formula. 
Although \cite[Definition 2.19]{filipazzi-svaldi} is stated in a setting with rational coefficients, the same definition applies to arbitrary generalized pairs (see also \cite[Definition 3.1]{has-gen-can-bundle-formula}).

\begin{prop}\label{prop--M=0}
Let $f\colon (X,B) \to Z$ be an lc-trivial fibration over a quasi-projective scheme $S$. 
Suppose that $B|_{f^{-1}(U)}$ is effective for some Zariski open subset $U \subset Z$.
Then the moduli $\mathbb{R}$-b-divisor $\boldsymbol{\rm N}$  associated to $f$ is a finite $\mathbb{R}_{>0}$-linear combination of b-nef$/S$ $\mathbb{Q}$-b-Cartier $\mathbb{Q}$-b-divisors on $Z$. 
\end{prop}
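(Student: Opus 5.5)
Here $\boldsymbol{\rm M}=0$, so the nef-part hypothesis of Theorem \ref{thm--gen-can-bundle-formula-intro} holds trivially. I would therefore use its algebraic counterpart (Ambro, Fujino--Gongyo, Filipazzi--Svaldi; or GAGA plus Theorem \ref{thm--gen-can-bundle-formula-intro} over a compactification of $S$). This gives the result when $B$ has rational coefficients, with $\boldsymbol{\rm N}$ b-nef$/S$ and $\mathbb{Q}$-b-Cartier. The task is to pass from rational to real coefficients of $B$ while keeping a decomposition into b-nef $\mathbb{Q}$-b-Cartier pieces.

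\emph{Step 1: reduction to a log resolution.} Pass to a log resolution $X'\to X$ and a birational model $Z'\to Z$ such that $f'\colon X'\to Z'$ is prepared: $Z'$ is smooth, the discriminant divisor and the images of the non-flat locus lie in an snc divisor $\Sigma$, and $\mathrm{Supp}\,B'\cup f'^{-1}\Sigma$ is snc. Both the moduli and discriminant b-divisors are invariant under such modifications, so we may assume this situation. Write the lc-trivial condition as $K_X+B\sim_{\mathbb{R}} f^{*}L$ for an $\mathbb{R}$-Cartier divisor $L$ on $Z$.

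\emph{Step 2: Shokurov-type rational decomposition.} Consider the finite-dimensional $\mathbb{R}$-vector space $V$ spanned by the components of $B$, together with the $\mathbb{R}$-Cartier divisors $L$ on $Z$. In $V$, look at the locus of pairs $(\Delta,L_\Delta)$ satisfying three conditions: $K_X+\Delta\sim_{\mathbb{R}} f^*L_\Delta$, $\Delta$ is effective over $U$, and the rank condition on $f_*\mathcal{O}_X(\lceil -\Delta^{<1}\rceil)$ holds at the generic point. The relation $\sim_{\mathbb{R}}$ is encoded by finitely many principal divisors $\mathrm{div}(\phi_i)$ with real coefficients, and the remaining conditions are linear equalities and inequalities with rational coefficients. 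Hence this locus is a rational polyhedron containing $(B,L)$. So we can write $B=\sum r_jB_j$ with $r_j>0$ and $\sum r_j=1$, where each $(X,B_j)\to Z$ is an lc-trivial fibration with $\mathbb{Q}$-coefficients, $B_j$ is effective over $U$, and $B_j$ has the same support as $B$. The points $B_j$ can be chosen arbitrarily close to $B$; this closeness is what keeps the coefficient conditions on $B_j$ (effectivity over $U$ and horizontal coefficients ${}\le 1$) intact.

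\emph{Step 3: affine linearity of the discriminant.} For a prime divisor $P$ on a higher model $Z''$, the lc threshold $t_P$ of $f^*P$ over the generic point of $P$ is computed on a model where everything is snc over $\eta_P$. It is given by a minimum of affine functions of the coefficients of $\Delta$, and the minimizing index can vary with $\Delta$. On the prepared model the combinatorics is fixed, and by shrinking the polyhedron to a small neighborhood of $B$ inside the minimal face, the minimizing components stay constant. Then $\boldsymbol{\rm G}^{(j)}$ and $\boldsymbol{\rm N}^{(j)}$ depend affinely on $B_j$. We would get $\boldsymbol{\rm N}=\sum r_j\boldsymbol{\rm N}^{(j)}$, with each $\boldsymbol{\rm N}^{(j)}$ b-nef$/S$ and $\mathbb{Q}$-b-Cartier by the rational case.

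\emph{Main obstacle.} The hard point is the uniformity in Step 3: affinity must hold simultaneously for all divisors on all higher models, not just finitely many. I would handle it using the fact that, for prepared fibrations, the rational-case theorem shows $\boldsymbol{\rm N}^{(j)}$ descends to $Z'$, i.e.\ it equals the Cartier closure of $\boldsymbol{\rm N}^{(j)}_{Z'}$. It then suffices to verify $\boldsymbol{\rm N}_{Z'}=\sum r_j\boldsymbol{\rm N}^{(j)}_{Z'}$ on $Z'$ alone. This involves only finitely many prime divisors, namely the components of $\Sigma$ and general divisors, and for those the linearity of the thresholds can be checked near $B$.
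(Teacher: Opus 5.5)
Your route differs from the paper's. The paper just invokes Theorem 1.7 of Fujino--Hashizume, or reruns the argument behind Theorem~\ref{thm--gen-can-bundle-formula-intro}. That theorem already allows $\mathbb{R}$-coefficients in $B$, and with $\boldsymbol{\rm M}=0$ its nef-part hypothesis is vacuous, so the paper needs no separate passage from $\mathbb{Q}$ to $\mathbb{R}$. Reducing to the rational case (Ambro, Fujino--Gongyo) via a rational polytope is a legitimate alternative. However, your reduction has a gap exactly at the step you call the main obstacle.

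From ``each $\boldsymbol{\rm N}^{(j)}$ descends to $Z'$'' and ``$\boldsymbol{\rm N}_{Z'}=\sum_j r_j\boldsymbol{\rm N}^{(j)}_{Z'}$'' you only get $\sum_j r_j\boldsymbol{\rm N}^{(j)}=\overline{\boldsymbol{\rm N}_{Z'}}$. To conclude $\boldsymbol{\rm N}=\sum_j r_j\boldsymbol{\rm N}^{(j)}$ you also need that $\boldsymbol{\rm N}$ itself, for the real boundary $B$, descends to $Z'$. The rational-case theorem says nothing about that.

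A priori you only have an inequality. Take a prime $Q$ on a higher model $Z''$ and let $b_Q(\Delta)$ be one minus the lc threshold over $\eta_Q$, computed on a resolution of $X\times_{Z'}Z''$. Then $b_Q$ is a maximum of finitely many affine functions of $\Delta$, hence convex. So $\boldsymbol{\rm G}_{Z''}\le\sum_j r_j\boldsymbol{\rm G}^{(j)}_{Z''}$, i.e.\ $\boldsymbol{\rm N}_{Z''}\ge\sum_j r_j\boldsymbol{\rm N}^{(j)}_{Z''}$. This is strict at $Q$ unless a single component computes the threshold over $Q$ for all $B_j$ at once. Nothing in your choice of the $B_j$, made before $Q$, guarantees that.

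One way to close the gap is a density argument.
\begin{enumerate}[(a)]
\item Your neighbourhood of $B$ and the hypotheses of the descent statement on the prepared model $Z'$ depend only on supports.
\item Hence for \emph{every} rational $\Delta$ in that neighbourhood, $b_Q(\Delta)$ equals the coefficient at $Q$ of the crepant pull-back of $(Z',\boldsymbol{\rm G}^{\Delta}_{Z'})$.
\item Both sides are continuous in $\Delta$; the left side is computed on one fixed resolution because the support is fixed.
\item Rational points are dense in the rational affine subspace, so equality holds at $\Delta=B$.
\end{enumerate}
Thus $\boldsymbol{\rm N}$ descends to $Z'$, and your reduction to $Z'$ becomes legitimate.

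There is a second, smaller problem in Step 3. A small neighbourhood of $B$ inside the minimal face of your polyhedron does \emph{not} keep the minimizing component fixed. Suppose two components $E_1,E_2$ over $P$ both compute the threshold at $B$. The face is cut out by the lc-trivial conditions, which know nothing about $E_1,E_2$, so it contains points near $B$ on both sides of $\{\ell_{E_1}=\ell_{E_2}\}$. Choosing $B_j$ on opposite sides makes $b_P$ strictly convex along the decomposition.

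You must instead take the $B_j$ in the smallest rational affine subspace containing $B$, including the coefficients of the $\mathbb{R}$-linear-equivalence data. Every tie $\ell_{E_1}(B)=\ell_{E_2}(B)$ is a rational affine equation satisfied by $B$, so it persists there. Strict inequalities persist by continuity. This same choice is what actually preserves the integral coefficients of $B$, such as horizontal coefficients equal to $1$; closeness alone does not.

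Relatedly, the rank condition you impose in Step 2 is not a linear condition. Effectivity over $U$ is linear, but it is not preserved by crepant pull-back to a log resolution. So do the decomposition on the original $X$ and pull back afterwards.
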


\begin{proof}
This is a special case of \cite[Theorem 1.7]{fujino-hashizume-adjunction}, or we can easily check that the arguments in \cite[Section 3]{has-gen-can-bundle-formula} work with only minor changes. 
\end{proof}

\begin{proof}[Proof of Theorem \ref{thm--main-algebraic-intro}]
With Proposition \ref{prop--M=0}, we may apply the arguments in \cite{filipazzi-gen-can-bundle-formula} to our situation. 
Indeed, we can use weak semistable reductions \cite{karu-phdthesis}, generalized dlt blow-ups, and the MMP for $\mathbb{Q}$-factorial generalized dlt pairs \cite{bz} even if the nef parts of generalized pairs are not represented by a finite $\mathbb{R}_{>0}$-linear combination of b-nef$/S$ $\mathbb{Q}$-b-Cartier $\mathbb{Q}$-b-divisors.    
\end{proof}

\begin{proof}[Proof of Theorem \ref{thm--main-analytic-intro}]
By \cite[Lemma 2.16]{fujino-analytic-bchm}, we get
$$W \subset W' \subset S,$$
where $W'\subset S$ is a Stein compact subset such that after shrinking $S$ around $W'$, the morphism $\pi \colon X \to S$ and $W'$ satisfy the property (P). 
Then Theorem \ref{thm--main-analytic-intro} for $W$ follows from Theorem \ref{thm--main-analytic-intro} (\ref{thm--main-analytic-intro-(ii)}) for $W'$. 
Hence, we may replace $W$ by $W'$, and therefore we may assume that $\pi$ and $W$ satisfy the property (P). 
Then Theorem \ref{thm--main-analytic-intro} can be proved similarly to the algebraic setting by using the case of $\boldsymbol{\rm M}=0$ in Theorem \ref{thm--gen-can-bundle-formula-intro}, \cite{eh-semistablereduction}, and \cite{fujino-analytic-bchm}.  
\end{proof}

\begin{rem}
We emphasize that all the statements in \cite[Section 3]{has-gen-can-bundle-formula} are necessary to prove Theorem \ref{thm--main-analytic-intro}. 
Indeed, to prove Theorem \ref{thm--main-analytic-intro} we need Theorem \ref{thm--gen-can-bundle-formula-intro} for $\boldsymbol{\rm M}=0$. 
Since we do not use the variations of mixed Hodge structures in \cite{has-gen-can-bundle-formula}, even in the case of lc-trivial fibrations we first need to establish the generalized canonical bundle formula for generalized klt-trivial fibration, and then we need to extend it to the case of generalized lc-trivial fibrations as in Theorem \ref{thm--gen-can-bundle-formula-intro}. 
\end{rem}

Finally, we provide a supplement to define the canonical b-divisors and the moduli $\mathbb{R}$-b-divisors associated to generalized lc-trivial fibrations (\cite[Definition 3.1]{has-gen-can-bundle-formula}). 

\begin{thm}\label{thm--welldef-can-b-div}
Let $\pi \colon X \to S$ be a projective morphism from a normal analytic variety $X$ to a complex analytic space $S$, and let $W \subset S$ be a Stein compact subset. 
Then, after shrinking $S$ around $W$ suitably, we can globally define the canonical b-divisor $\boldsymbol{\rm K}$ on $X$, which is a $\mathbb{Z}$-b-divisor whose traces are the canonical divisors.
In other words, for any proper bimeromorphic morphism $X' \to X$ from a normal analytic variety $X'$, the canonical divisor $K_{X'}$ on $X'$ is well defined as a Weil divisor on $X'$, and furthermore, we can define these canonical divisors so that for any two proper bimeromorphic morphisms $X_{1} \to X$ and $X_{2} \to X$ such that the induced bimeromorphic map $\phi \colon X_{1} \dashrightarrow X_{2}$ is a bimeromorphic contraction, we have $\phi_{*}K_{X_{1}}=K_{X_{2}}$ as Weil divisors on $X_{2}$. 
\end{thm}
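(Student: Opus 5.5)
The idea is to fix a single meromorphic top form on $X$ near $W$ and to define all canonical divisors as divisors of its pullbacks. Then compatibility under bimeromorphic contractions holds for free.

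\emph{Step 1: a global top form.} After shrinking $S$ around $W$ we may assume $S$ is Stein, or at least that a Stein open neighborhood of $W$ is available, and that $X$ has finitely many irreducible components over it. Since $X$ is normal, it is irreducible; shrinking further we may assume $X$ is irreducible of dimension $n$. Using projectivity of $\pi$, choose a $\pi$-very ample line bundle $\mathcal{O}_X(H)$. Consider the sheaf $\omega = (\Omega^{n}_{X_{\rm reg}})$ extended reflexively to $X$, tensored with $\mathcal{O}_X(mH)$ for $m\gg 0$. By Cartan's Theorem A on the Stein space $S$, applied to $\pi_*(\omega\otimes\mathcal{O}_X(mH))$ over a neighborhood of the compact set $W$, we obtain a nonzero global section $\sigma$ on $X$ after shrinking $S$. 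Similarly we obtain a nonzero section $\tau$ of $\mathcal{O}_X(mH)$. Then $\omega_0:=\sigma/\tau$ is a nonzero meromorphic $n$-form on $X_{\rm reg}$, meromorphic along $X$. The key point is that $\omega_0$ has well-defined divisor with only finitely many components over the relatively compact region. This holds because $\pi$ is proper and we may shrink $S$ to a relatively compact neighborhood of $W$, so that all divisors in question are finite sums of prime divisors.

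\emph{Step 2: definition.} For any proper bimeromorphic $g\colon X'\to X$ from a normal analytic variety $X'$, the form $g^*\omega_0$ is a meromorphic top form on $X'_{\rm reg}$, since $g$ is an isomorphism over a dense Zariski open subset. Set $K_{X'}:={\rm div}(g^*\omega_0)$. This is defined by taking orders of vanishing along each prime divisor of $X'$. Such a prime divisor meets $X'_{\rm reg}$, because the singular locus has codimension $\ge 2$, and the order of $g^*\omega_0$ at its generic point is given in local coordinates there. The result is a Weil divisor, and it represents the canonical sheaf, because $\mathcal{O}_{X'}(K_{X'})\cong \omega_{X'}$ via multiplication by $g^*\omega_0$. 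Local finiteness follows from meromorphy.

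\emph{Step 3: compatibility.} Let $X_1\to X$ and $X_2\to X$ be given, and let $\phi\colon X_1\dashrightarrow X_2$ be a bimeromorphic contraction. By construction the pulled-back forms agree on the dense open set where $\phi$ is an isomorphism, so $\phi^*\omega_2=\omega_1$ as meromorphic forms. Because $\phi^{-1}$ contracts no divisors, every prime divisor $P$ on $X_2$ is the strict transform of a prime divisor $P_1$ on $X_1$. Moreover $\phi$ is an isomorphism near the generic point of $P_1$. Hence the two forms have the same order along $P$ and $P_1$, and the divisors on $X_1$ that $\phi$ contracts disappear under $\phi_*$. This gives $\phi_*K_{X_1}=K_{X_2}$.

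\emph{Main obstacle.} The delicate step is Step 1: producing a global nonzero meromorphic top form on a neighborhood of $\pi^{-1}(W)$. On a general analytic space, $X$ need not carry global meromorphic sections. This is exactly where the Stein compact subset and the projectivity of $\pi$ are used, through Theorem A for $\pi_*$ of the relatively ample twist over a Stein neighborhood of $W$. I would check there that the pushforward is coherent, by Grauert's direct image theorem, and that the section can be taken nonzero on the irreducible $X$.
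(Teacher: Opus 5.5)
Your strategy is essentially the paper's: twist by a relatively ample divisor, use Theorem A over a Stein neighborhood of $W$ to get sections $\sigma,\tau$, fix the single meromorphic top form $\sigma/\tau$, and define every $K_{X'}$ as the divisor of its transport. Compatibility under bimeromorphic contractions is then automatic, and your Step 3 spells out what the paper leaves implicit. The difference is where the form lives. The paper first takes a resolution $f\colon Y\to X$ and works with the invertible sheaf $\omega_Y\otimes\mathcal{O}_Y(mA)$. It gets nonvanishing of the direct image from vanishing on an analytically general fiber plus cohomology and base change; your relative Serre argument for $m\gg 0$ does the same job. Since $X'\dashrightarrow Y$ is not a morphism, the paper then passes to the normalization $Y'$ of the main component of $Y\times_X X'$ and sets $K_{X'}:=q_*K_{Y'}$. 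You work on $X$ itself with $\omega_X=j_*\Omega^n_{X_{\rm reg}}$ and pull back along $g\colon X'\to X$, which avoids the auxiliary model $Y'$.

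The price is paid in Step 2, not Step 1: Step 1 goes through as you outline, but Step 2's justification is insufficient as written. That $g$ is an isomorphism over a dense Zariski open subset only shows $g^*\omega_0$ is meromorphic there. Along prime divisors of $X'$ lying over $X_{\rm sing}$ you a priori only know that $g^*\sigma$ is holomorphic on their complement. A holomorphic function on the complement of a divisor need not be meromorphic (think of $e^{1/z}$ on $\mathbb{C}^{*}$). Because $\sigma$ is a section of a non-invertible sheaf, an argument is needed here.

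For instance, locally on $X$ embed into $\mathbb{C}^N$ and pick linear functions $\ell_1,\dots,\ell_n$ with $\eta:=d\ell_1\wedge\cdots\wedge d\ell_n$ not identically zero on $X_{\rm reg}$. Since $\omega_X/\mathcal{O}_X\eta$ is coherent and supported on a proper analytic subset, locally $c\sigma=a\eta$ for holomorphic $a,c$ with $c\not\equiv 0$. Hence $g^*\sigma=(g^*a/g^*c)\,d(g^*\ell_1)\wedge\cdots\wedge d(g^*\ell_n)$ is visibly meromorphic on $X'_{\rm reg}$. Alternatively, extend $\sigma/\eta$ across $X_{\rm sing}$ by Levi's extension theorem.

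This is exactly what the paper's detour through the smooth $Y$ buys. There $u/v$ is a meromorphic section of a line bundle, so $p^*(u/v)$ is automatically a meromorphic section of $\omega_{Y'}$ on $Y'_{\rm reg}$.

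A minor point: shrinking $S$ cannot make $X$ irreducible; it may split into several connected components. You should choose $\sigma$ and $\tau$ not identically zero on any component, which global generation permits.
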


\begin{proof}
Let $f \colon Y \to X$ be a resolution of $X$. 
By shrinking $S$ around $W$, we may assume that $S$ is Stein, $\pi \circ f \colon Y \to S$ is projective,   and there exists a $(\pi \circ f)$-very ample divisor $A$ on $Y$ that defines an embedding of $Y$ into $S \times \mathbb{P}^{N}$ for some $N$ (\cite[Lemma 2.2]{eh-semistablereduction}). 
From now on, we do not shrink $S$ anymore. 

Let $\omega_{Y}$ be the canonical bundle on $Y$. 
We fix a positive integer $m$ such that some analytically general fiber $F$ of $\pi \circ f$ satisfies $H^{i}(F,(\omega_{Y}\otimes \mathcal{O}_{Y}(mA))|_{F}) = 0$ for all $i>0$ and $H^{0}(F,(\omega_{Y}\otimes \mathcal{O}_{Y}(mA))|_{F}) \neq 0$.
By the cohomology and base change theorem, we have $(\pi \circ f)_{*}(\omega_{Y}\otimes \mathcal{O}_{Y}(mA))\neq 0$. 
By Cartan's Theorem A (cf.~\cite[Theorem 2.6]{fujino-analytic-bchm}), we see that $(\pi \circ f)_{*}\bigl(\omega_{Y}\otimes \mathcal{O}_{Y}(mA)\bigr)$ is generated by global sections. 
Thus, $\omega_{Y}\otimes \mathcal{O}_{Y}(mA)$ has a non-zero global section $u$. 
Considering the local expression of $u$, we can easily check that $u$ defines a Weil divisor $D$ on $Y$. 
On the other hand, the choice of $A$ shows that $\mathcal{O}_{Y}(mA)$ has a global section $v$. 
Then $K_{Y}:=D-mA$ is the canonical divisor on $Y$ because the multiplication by $\frac{u}{v}$ defines an isomorphism $\mathcal{O}_{Y}(K_{Y})\to \omega_{Y}$. 
Note that $K_{Y}$ depends on the choice of $u$ and $v$. 
We fix $K_{Y}$ with the corresponding meromorphic section $\frac{u}{v}$.

For any proper bimeromorphic morphism $X' \to X$ from a normal analytic variety $X'$, let $Y'$ be the normalization of the main component of $Y \times_{X}X'$ and let $p \colon Y' \to Y$ and $q \colon Y' \to X'$ be the projections. 
Then $p^{*}\bigl(\frac{u}{v}\bigr)$ is a meromorphic section of the canonical sheaf $\omega_{Y'}$, and therefore it globally defines the canonical divisor $K_{Y'}$ on $Y'$ such that $p_{*}K_{Y'}=K_{Y}$. 
Hence, $K_{X'}:=q_{*}K_{Y'}$ is also well defined, and these $K_{X'}$ for all $X' \to X$ form the canonical b-divisor $\boldsymbol{\rm K}$.
\end{proof}


\end{document}